\documentclass[11pt]{amsart}
\usepackage{amsmath,amssymb,mathtools,amsthm}
\usepackage[backend=biber,style=numeric,sorting=none]{biblatex}
\usepackage[hidelinks]{hyperref}
\usepackage{xcolor}
\usepackage{listings}
\usepackage{todonotes}
\usepackage{pdflscape}
\IfFileExists{Counterexample/counterexample.bib}
  {\addbibresource{Counterexample/counterexample.bib}}
  {\addbibresource{counterexample.bib}}

\newtheorem{theorem}{Theorem}
\newtheorem{lemma}[theorem]{Lemma}

\newtheorem{remark}[theorem]{Remark}
\newtheorem{example}{Example}
\newtheorem{corollary}{Corollary}

\DeclarePairedDelimiter{\norm}{\lVert}{\rVert}

\newcommand{\Deltas}{\boldsymbol{\Delta}}
\newcommand{\HHH}{\mathcal{H}}
\newcommand{\ip}[2]{\left\langle #1,#2\right\rangle}
\newcommand{\binvec}[2]{\begin{pmatrix}#1\\#2\end{pmatrix}}
\newcommand{\mat}[1]{\begin{pmatrix}#1\end{pmatrix}}

\newcommand{\tr}{\operatorname{tr}}

\title[Stable systems for which the OZF stability test fails]{Existence of stable Lur'e systems for which the O'Shea-Zames-Falb stability test fails}
\author{Andrey Kharitenko}
\thanks{\textsc{Optimization and Decision Intelligence Group, ETH Zurich.}\\
\textit{Email:} \texttt{akharitenko@ethz.ch}}
\date{}

\begin{document}
\begin{abstract}
In this note we show that the well-known stability test proposed by O'Shea, Zames and Falb is not necessary for robust stability of Lur'e systems with slope-restricted nonlinearities and thereby disprove a conjecture by J. Carrasco.
Specifically we construct a full-block multiplier which certifies stability of the example Lur'e interconnection from \cite{kharitenko2024exactness} that does not admit an OZF multiplier.
\end{abstract}

\keywords{Absolute stability, Zames-Falb multipliers, slope-restricted nonlinearities}

\maketitle

\section{Introduction}

A classical task in control theory is determining stability of Lur'e feedback interconnections with static nonlinearities.
This problem has its roots in the first half of 1940s \cite{Lure1944} and has given impetus to various subfields in control, including Popov's criterion, the powerful frequency-domain and quadratic-form methods by Yakubovich (including what is now
known as the Kalman--Yakubovich--Popov (KYP) lemma) \cite{Yakubovich1962,Kalman1963}, stability multipliers and integral quadratic constraints (IQC) theory by Megretski and Rantzer \cite{MegretskiRantzer1997}.
We point to \cite{liberzon2006essays,carrasco2016zames} for more references and a historical outlook.
One class of Lur'e systems that has received considerable attention is in which the static nonlinearity $\phi$ is slope-restricted, i.e.
\begin{align}
    \label{eq:slopeRestricted}
    0\leq \frac{\phi(x_1) - \phi(x_2)}{x_1 - x_2} \leq \kappa \text{\ \ for all\ \ } x_1,x_2 \in \mathbb{R}\,,
\end{align}
for some $0 < \kappa \leq \infty$ and its multi-dimensional analogues \cite{mancera2005all}.
Starting from O'Shea, Zames and Falb who introduced a powerful sufficient stability criterion \cite{ZamesFalb1968} (thereon called OZF-criterion), many researchers in the field of control have studied this configuration with the latter criterion or generalized it, 
including Jan Willems and R. Brockett \cite{willems1968rearrangement}, V. Yakubovich \cite{Yakubovich2000QuadraticCriterion}, C. Desoer and M. Vidyasagar \cite{desoer2009feedback,Vidyasagar1978}, N. Barabanov \cite{Barabanov2003Monotone}, M. Safonov \cite{safonov2000zames}, A. Megretski \cite{Megretski1993CircleCriterion}, D. Altshuller \cite{altshuller2012frequency}, C. Scherer \cite{fetzer2017absolute}, J. Carrasco \cite{carrasco2016zames} and many others.
Furthermore, previous stability tests for Lur'e systems with slope-restricted nonlinearities turned out to be special cases of the OZF-criterion and the latter was rediscovered multiple times in the signal processing and optimization literature, e.g. \cite{claasen1975frequency, kosyakin1983oscillations}.
Moreover, new applications, such as analysis of optimization algorithms \cite{lessard2016analysis}, systems with saturation \cite{heath2021multipliers} and neural network controllers \cite{yin2021stability,pauli2021linear}, have sparked a renewed interest in recent times \cite{zhang2022duality,su2023necessity}. 
The question whether this sufficient criterion is necessary for absolute stability as well, is therefore of great theoretical interest.
Due to the fact that no more powerful criterion has been found it has been conjectured to be the case by J. Carrasco \cite{su2023necessity}.
Results on the necessity of stability criteria are rare and more difficult to obtain than proofs of sufficiency.
Notable (counter-)examples include the proof that Popov-criterion is conservative \cite{pyatnitskii1973existence}, the necessity of the circle criterion within the class of static, time-varying sector-bounded nonlinearities \cite{Megretski1993CircleCriterion,Rump2001CircleConservatism} and the disproof of the exactness of the convex upper bound for the structured singular value \cite{Treil1999InfiniteGap}.
It was shown in \cite{kharitenko2024exactness} that the OZF stability test is equivalent to the stability of a lifted MIMO interconnection, but the exactness for the original SISO remained open. 
Here, motivated by \cite{vershik1987quadraticduality} and a finite-horizon counterexample provided by ChatGPT 5.5, we show that the OZF stability test is indeed conservative by developing a new stability test based on the copositive cone and showing that the interconnection considered in \cite{kharitenko2024exactness} for which the OZF stability test fails is actually stable.
In particular this explicitly disproves the mentioned conjecture due to J. Carrasco.


\section{Notation and Definitions}

\subsection{Notation}
This note uses the notation and definitions from \cite{kharitenko2024exactness}, which we repeat below. 
We define the sets $[N] = \{0,\ldots,N-1\}$ and $\ell_d^{2e} = (\mathbb{R}^d)^{\mathbb{N}_0}$.
For a multifunction $\phi:\mathbb{R}\rightrightarrows\mathbb{R}$, the associated static relation on signals is denoted by $\Delta_\phi$ on $\mathbb{R}^N$, i.e. $(z,w) \in \Delta_\phi$ iff $w_k \in \phi(z_k)$ for all $k \in [N]$.
The class of total \((0,\kappa)\)-slope-restricted relations with $0\in \phi(0)$ is
\[
 \Deltas_1(0,\kappa)
 :=
 \{\Delta_\phi\mid 0 \in \phi(0) \text{\ and \eqref{eq:slopeRestricted} holds}\}\,,
\]
where for $\kappa = \infty$ it is understood that $\phi$ is total and monotone \cite{kharitenko2024exactness}.
The set of doubly hyperdominant matrices is defined as 
\begin{align*}
    \HHH_N = 
    \{M \in \mathbb{R}^{N \times N} \mid M \boldsymbol{1}_N \geq 0\,,\; M^\top \boldsymbol{1}_N \geq 0\,,\; \forall i\neq j\,: e_i^\top M e_j \leq 0 \}\,.
\end{align*}
For $N = \infty$ it is understood that the row and column sums in this definition converge and $M$ defines a bounded operator on $\ell^2$.
Finally $C^*$ denotes the dual cone of $C$.
\\
\subsection{OZF stability test}
The O'Shea-Zames-Falb stability test for $N = \infty$ is now as follows:
\begin{theorem}[OZF stability test]
    Let $G$ be a stable system and suppose the interconnection between $G$ and $\Deltas_1(0,\kappa)$ is well-posed.
    Then the interconnection between $G$ and $\Deltas_1(0,\kappa)$ is robustly stable if there exists some \(\varepsilon>0\) and \(M\in\HHH_\infty\) such that 
    \begin{align}
    \label{eq:zamesFalbStabilityTest}
    \ip{\binvec{Gv}{v}}
    {\mat{
    0 & M^\top\\
    M & -\frac{1}{\kappa}(M+M^\top)
    }\binvec{Gv}{v}}
    \leq -\varepsilon \norm{v}_2^2
    \quad\text{for all }v\in\ell^2.
    \end{align}
\end{theorem}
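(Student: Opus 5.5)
The plan is the standard two-ingredient argument: a static, time-domain quadratic constraint satisfied by every $\Delta_\phi\in\Deltas_1(0,\kappa)$, and a homotopy (loop-transformation) argument that turns the strict frequency-domain-type inequality \eqref{eq:zamesFalbStabilityTest} into a uniform $\ell^2$ bound.

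\textbf{Step 1: the quadratic constraint.} The key lemma states that for every $(z,w)\in\Delta_\phi$ with $\phi$ $(0,\kappa)$-slope-restricted and $0\in\phi(0)$, and every $M\in\HHH_\infty$,
\begin{align*}
\ip{\binvec{z}{w}}{\mat{0 & M^\top\\ M & -\tfrac{1}{\kappa}(M+M^\top)}\binvec{z}{w}} \geq 0 .
\end{align*}
The left-hand side equals $2\ip{Mw}{z-\tfrac1\kappa w}$. Set $y=z-\tfrac1\kappa w$. For $\kappa<\infty$, $y\mapsto w$ is a monotone map with $0\mapsto0$. Indeed, $w=\psi(y)$ where $\psi$ is monotone and $\kappa$-Lipschitz; for $\kappa=\infty$, $y=z$.

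The inequality $\ip{M\psi(y)}{y}\ge 0$ is proved in the finite case $N<\infty$ by the classical Zames--Falb decomposition. Doubly hyperdominant $M$ are exactly the nonnegative combinations of the following generators:
\begin{itemize}
\item diagonal matrices $e_ie_i^\top$;
\item differences $e_ie_i^\top-e_ie_j^\top$ and $e_je_j^\top-e_ie_j^\top$, used together with the row and column slack.
\end{itemize}
This is cleanest via writing $M = D - P$ with $P\ge0$ entrywise and $D$ diagonal, $D\ge \max$(row sum, column sum) of $P$. One then uses the inequality $\psi(a)b\le \Psi(b)+\Psi^*(\psi(a))$ with $\Psi$ the convex primitive of $\psi$, together with $\psi(a)a=\Psi(a)+\Psi^*(\psi(a))$. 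These combine so that
\[
\sum_{i,j}P_{ij}\psi(y_i)y_j \le \sum_i \Big(\tfrac12\sum_j (P_{ij}+P_{ji})\Big)\psi(y_i)y_i \le \sum_i D_{ii}\psi(y_i)y_i .
\]
Here $\psi(y_i)y_i\ge0$ is used. For $N=\infty$ the result follows by truncation, since row and column sums converge and $M$ is bounded.

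\textbf{Step 2: stability.} Embed the uncertainty in the homotopy $\tau\Delta_\phi$, $\tau\in[0,1]$. Each $\tau\phi$ remains in $\Deltas_1(0,\kappa)$, so Step 1 applies uniformly. Let $w=\tau\Delta_\phi(z)$ and $z=Gw+d$ with $d\in\ell^2$. Insert $v=w$ into \eqref{eq:zamesFalbStabilityTest} and write $Gw=z-d$. Combining with Step 1 gives
\[
\varepsilon\norm{w}^2\le c\,\norm{d}\,(\norm{w}+\norm{d}),
\]
with $c$ depending on $\norm{G}$ and $\norm{M}$. Hence $\norm{w}\le C\norm{d}$, with $C$ independent of $\tau$ and $\phi$.

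This bound holds a priori only for $w\in\ell^2$. To extend it, use well-posedness and causality: apply the argument to truncations $P_Tw$ and use the fact that the quadratic constraint holds on truncations because the relation is static. Alternatively, run the standard homotopy argument of Megretski--Rantzer: the uniform bound plus continuity of the closed-loop map in $\tau$ (in increments of size below $1/(C\kappa\norm{G})$) propagates boundedness from $\tau=0$ to $\tau=1$.

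\textbf{Main obstacle.} The main difficulty is Step 1 for infinite $M$ and multivalued $\phi$ in the $\kappa=\infty$ case, i.e.\ justifying the convex-conjugate inequality with relations. I would first handle it through a Yosida approximation of the monotone relation, then pass to the limit.
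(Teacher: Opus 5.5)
The paper gives no proof of this theorem (it is recalled from the literature), so your argument has to stand on its own. For $\kappa<\infty$ it essentially does via your homotopy route. $\tau\Delta_\phi\in\Deltas_1(0,\kappa)$ has gain at most $\kappa$, and the quadratic form equals $2\ip{w}{M(z-\frac1\kappa w)}$, which is affine in $z$. Subtracting the two inequalities therefore gives directly $\varepsilon\|w\|^2\le 2\ip{w}{Md}$, a bound uniform in $\tau$.

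The genuine gap is Step~2 for $\kappa=\infty$, which the statement includes and which is the case the paper actually uses. There your increment $1/(C\kappa\|G\|)$ is zero, and $\tau\Delta_\phi$ is an unbounded, possibly multivalued relation. So the Megretski--Rantzer continuity step has nothing to work with.

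The truncation alternative does not fill this. The static relation survives truncation, but the linear side does not, because $M\in\HHH_\infty$ is in general noncausal. By causality of $G$, $P_Tz=GP_Tw+P_Td-(I-P_T)GP_Tw$. Subtracting the inequalities for $(P_Tz,P_Tw)$ and $v=P_Tw$ then leaves
\[
\varepsilon\|P_Tw\|^2\le 2\ip{P_Tw}{MP_Td}-2\ip{P_Tw}{P_TM(I-P_T)GP_Tw},
\]
and the last term is of order $\|M\|\|G\|\|P_Tw\|^2$ unless $M$ has no entries above the diagonal.

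To close the gap, transform back to a bounded class. Set $u=z+w$; then $u=(G+I)w+d$ and $w=u-(I+\phi)^{-1}(u)$, which is $(0,1)$-slope-restricted. First pass to the maximal monotone extension of $\phi$; by uniqueness this does not change the solution. The $\kappa=\infty$ inequality for $G$ is then literally the $\kappa=1$ inequality for $G+I$, both sides being $2\ip{v}{MGv}$. Your finite-$\kappa$ homotopy now applies. Well-posedness along it is inherited, because the relations $\tau\bigl(I-(I+\phi)^{-1}\bigr)$ correspond to total monotone relations in the original coordinates.

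Alternatively, use that $M+M^\top\succeq0$ (it is symmetric and diagonally dominant), so the test at $\kappa=\infty$ implies it at every finite $\kappa'$ with the same $\varepsilon$. Apply the finite case to the Yosida approximations $\phi_\lambda\in\Deltas_1(0,1/\lambda)$ with the $\lambda$-independent bound, and pass to the limit via closedness of the graph and uniqueness. That is where your Yosida idea belongs.

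Step~1 also needs local repairs.

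\textbf{The Lipschitz claim.} Your claim that $y\mapsto w$ is a $\kappa$-Lipschitz function is false. For $\phi(z)=\kappa z$ it is the vertical line $\{0\}\times\mathbb{R}$, and in general its slopes $s/(1-s/\kappa)$ are unbounded. It is only a monotone relation through the origin. That is all Step~1 needs, but it is another reason the homotopy must be run on $\phi$ itself.

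\textbf{The middle inequality.} The middle inequality in your display is false. Take $P=\mat{0&1\\0&0}$, $\psi(t)=\max(0,t-1)$ and $y=(1+\delta,1)$ with $0<\delta<1$: the left side is $\delta$ and the middle one is $\delta(1+\delta)/2$. Fenchel--Young actually yields $\sum_{i,j}P_{ij}\psi(y_i)y_j\le\sum_j c_j\Psi(y_j)+\sum_i r_i\Psi^*(\psi(y_i))$, with $r,c$ the row and column sums of $P$. You then conclude from $\Psi\ge0$, $\Psi^*\ge0$ and $D_{ii}\ge\max(r_i,c_i)$. The two hyperdominance conditions are used separately, not through their average.

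\textbf{The generator list.} Your generator list for $\HHH_N$ is also wrong, though you never use it. $e_ie_i^\top-e_ie_j^\top$ has a column sum $-1$, and extreme rays such as $I-C$ for a cyclic permutation matrix $C$ are missing.

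\textbf{Multivaluedness.} This is not an obstacle in Step~1. Finitely many pairs from a monotone relation with $0\in\phi(0)$ are similarly ordered and unbiased. So the finite-horizon inequality is exactly Willems' inclusion $\boldsymbol{\Pi}_{\operatorname{OZF}}^N\subseteq\boldsymbol{\Pi}_{\operatorname{SO}}^N$ quoted in the paper.
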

Here well-posedness and robust stability the interconnection of $G$ and $\Deltas_1(0,\kappa)$ is defined in \cite{kharitenko2024exactness}.

In the following we mostly consider the monotone case $\kappa = \infty$, which is without loss of generality due to the loop transform $G \mapsto G - \frac{1}{\kappa}$ \cite{kharitenko2024exactness}

\section{O'Shea-Zames-Falb multipliers on a finite horizon}

First we prove the conservatism of the finite-horizon OZF multipliers.
Let $N \in \mathbb{N}$ and consider the set of all similarly ordered and unbiased sequences
\begin{align*}
    \mathcal{K}_{\operatorname{SO}}^N = \left\{\mat{x \\ y} \in \mathbb{R}^{2N} \mid (x_i-x_j)(y_i-y_j) \geq 0\,, x_i y_i \geq 0 \text{\ for\ } i,j \in [N]\right\}\,.
\end{align*}
as well as the set of all full-block multipliers
\begin{align*}
    \boldsymbol{\Pi}_{\operatorname{SO}}^N 
    = \{\Pi \in \mathbb{S}^{2N \times 2N} \mid \forall z \in \mathcal{K}_{\operatorname{SO}}^N\,:\; z^\top \Pi z \geq 0\} 
    = \{zz^\top \mid z \in \mathcal{K}_{\operatorname{SO}}^N\}^* \,.
\end{align*}
The corresponding subset of OZF multipliers is defined as
\begin{align*}
    \boldsymbol{\Pi}_{\operatorname{OZF}}^N = \left\{\Pi = \mat{0 & M^\top \\ M & 0} \mid M \in \mathcal{H}_N\right\}\,.
\end{align*}
In \cite{willems1968rearrangement} it was shown that $\boldsymbol{\Pi}_{\operatorname{OZF}}^N \subseteq \boldsymbol{\Pi}_{\operatorname{SO}}^N$.
This was improved in \cite{mancera2005all}, where it was established that
\begin{align*}
    \boldsymbol{\Pi}_{\operatorname{OZF}}^N = \boldsymbol{\Pi}_{\operatorname{SO}}^N \cap \left\{\mat{0 & M^\top \\ M & 0} \mid M \in \mathbb{R}^{N \times N}\right\}\,,
\end{align*}
i.e. $\boldsymbol{\Pi}_{\operatorname{OZF}}^N$ is the largest class of \emph{passivity multipliers} for the set $\mathcal{K}_{\operatorname{SO}}^N$.
Since trivially $\mathbb{S}_+^{2N \times 2N} \subseteq \boldsymbol{\Pi}_{\operatorname{SO}}^N$, we obtain the following inclusion\footnote{Note that the cone $\mathbb{S}_+^{2N \times 2N} + \boldsymbol{\Pi}_{\operatorname{OZF}}^N$ is closed, since it is a sum of a closed convex set and a polyhedral (convex) set and the recession condition holds \cite[Theorem 20.3]{rockafellar1970convex}}
\begin{align*}
    \mathbb{S}_+^{2N \times 2N} + \boldsymbol{\Pi}_{\operatorname{OZF}}^N \subseteq \boldsymbol{\Pi}_{\operatorname{SO}}^N\,.
\end{align*}
To the best of the author's knowledge, it was not known whether this inclusion is strict.
In the following we show that this is indeed the case.

\subsection{An elementary counterexample}
The following example was provided by ChatGPT 5.5 Pro Extended Thinking 
and is surprisingly simple.
\begin{example}
\label{ex:counterexampleSmall}
  For \(N=2\), define
  \[
  B=
  \mat{\frac{3}{4}&1\\-2&\frac{3}{4}},
  \qquad
  \Pi =
  \mat{I & B^\top\\B & I}.
  \]
  Then
  \[
  \Pi \in  \boldsymbol{\Pi}_{\operatorname{SO}}^2 \text{\ \ but\ \ } \Pi \notin \mathbb{S}_+^{4 \times 4} + \boldsymbol{\Pi}_{\operatorname{OZF}}^2\,.
  \]
\end{example}

\begin{proof}
First we show $\Pi \in \boldsymbol{\Pi}_{\operatorname{SO}}^2$.
Let $(x,y) \in \mathcal{K}_{\operatorname{SO}}^2$ be similarly ordered and unbiased, i.e. $(x, y) \in C_i \times C_i$ for some $i=1,\ldots,6$, where
\begin{align*}
 C_1&=\{z \in \mathbb{R}^2 \mid z_0\geq z_1\geq0\},\\
 C_2&=\{z \in \mathbb{R}^2 \mid z_1\geq z_0\geq0\},\\
 C_3&=\{z \in \mathbb{R}^2 \mid z_0\leq0\leq z_1\},
\end{align*}
and $C_4 = -C_1$, $C_5 = -C_2$, and $C_6 = -C_3$. 
Due to invariance of quadratic forms w.r.t. sign-change, it is enough to check the case where \(x\) and \(y\) both lie in one of
$C_1,C_2,C_3$. 
These cases can be parametrized as
\[
 C_1:\quad x=(a+b,a),\quad y=(c+d,c),
\]
\[
 C_2:\quad x=(a,a+b),\quad y=(c,c+d),
\]
\[
 C_3:\quad x=(-a,b),\quad y=(-c,d),
\]
with \(a,b,c,d\geq0\). Direct expansion gives
\[
 y^\top Bx\geq-\frac{5}{4}bc,\qquad
 y^\top Bx\geq-\frac{5}{4}ad,\qquad
 y^\top Bx\geq-bc.
\]
In the three cases, respectively,
\[
 \norm{x}\geq b,\quad \norm{y}\geq\sqrt2\,c;
 \qquad
 \norm{x}\geq\sqrt2\,a,\quad \norm{y}\geq d;
 \qquad
 \norm{x}\geq b,\quad \norm{y}\geq c.
\]
Since \(5/4<\sqrt2\), each case gives
\[
 y^\top Bx\geq-\norm{x}\norm{y}.
\]
Therefore
\[
 \mat{x\\y}^{\!\top}\Pi\mat{x\\y}
 =
 \norm{x}^2+\norm{y}^2+2y^\top Bx
 \geq
 \left(\norm{x}-\norm{y}\right)^2
 \geq0,
\]
and hence \(\Pi\in\boldsymbol{\Pi}_{\operatorname{SO}}^2\).
It remains to show that $\Pi \not \in \mathbb{S}_+^{4 \times 4} + \boldsymbol{\Pi}_{\operatorname{OZF}}^2$.
Suppose that
\[
 \Pi=
 T+\mat{0&M^\top\\M&0}\,,
\]
for some $T \in \mathbb{S}_+^{4 \times 4}$ and $M\in\mathcal H_2$.
Write
\[
 M=\mat{\alpha&-u\\-v&\delta},
 \qquad u,v\geq0.
\]
The row and column sum inequalities for \(M\in\mathcal H_2\) imply
\(\delta\geq u\) and \(\delta\geq v\). Moreover,
\[
 T=
 \mat{I&(B-M)^\top\\B-M&I} \geq 0,
\]
so the Schur complement gives
\[
1 
\geq \norm{B-M}^2
= (1+u)^2 + \left(\frac{3}{4}-\delta\right)^2
\]
Since \(u\geq0\), this forces $u=0$ and $\delta=\frac{3}{4}$ and therefore $v\leq3/4$. 
But then
\begin{align*}
    1 
    \geq \norm{B-M}
    \geq \lvert -2+v \rvert 
    > 1\,,
\end{align*}
a contradiction.
Thus $\Pi \not \in \mathbb{S}_+^{4 \times 4} + \boldsymbol{\Pi}_{\operatorname{OZF}}^2$.
\end{proof}

\subsection{Full block multipliers in terms of copositive matrices}
In this subsection, motivated by \cite{vershik1987quadraticduality} (see also \cite{noori2025reluqc,noori2026dynamic,biertumpfel2025exact} for recent works that also exploit copositivity characterizations of other multipliers), we will give a description of $\boldsymbol{\Pi}_{\operatorname{SO}}^N$ in terms of copositive matrices.
This will provide a way to construct tests different from the one provided by OZF multipliers (i.e. \eqref{eq:zamesFalbStabilityTest}) and that are potentially less conservative.
Before we present it, we will need to introduce some notation.
Let
\begin{align*}
    \operatorname{COP}_N
    = \{Q=Q^\top\in\mathbb{S}^{N\times N} \mid w^\top Qw\geq0\text{ for all }w\in\mathbb R_+^N\}.
\end{align*}
be the copositive cone.
For each permutation $\sigma \in \mathcal{S}_{[N+1]}$ on $[N+1]$ define the matrices
$L_\sigma\in \{-1,0,1\}^{N\times N}$, with rows $i=0,\ldots,N-1$ and
columns $k=1,\ldots,N$, by
\begin{align}
    \label{eq:transformationMatrices}
    (L_\sigma)_{ik}
    =
    \begin{cases}
        1,& \sigma^{-1}(N)< k \leq \sigma^{-1}(i)\,,\\
        -1,& \sigma^{-1}(i)< k \leq \sigma^{-1}(N)\,,\\
        0,&\text{otherwise}\,,
    \end{cases}
    \quad
    K_\sigma = \mat{L_\sigma & 0 \\ 0 & L_\sigma}\,.
\end{align}
Then we have the following description.
\begin{theorem}
    \label{thm:descriptionFullBlock}
    It holds that
    \begin{align*}
        \boldsymbol{\Pi}_{\operatorname{SO}}^N
        = \{\Pi \in \mathbb{S}^{2N \times 2N} \mid 
        K_\sigma^\top\Pi K_\sigma\in\operatorname{COP}_{2N}
        \; \forall \sigma \in \mathcal{S}_{[N+1]}\}\,.
    \end{align*}
\end{theorem}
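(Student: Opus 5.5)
The plan is to show that the cone $\mathcal{K}_{\operatorname{SO}}^N$ is exactly the finite union of the polyhedral cones $K_\sigma\mathbb{R}_+^{2N}$ over $\sigma\in\mathcal{S}_{[N+1]}$. Given this, the theorem follows at once. Indeed, $\Pi\in\boldsymbol{\Pi}_{\operatorname{SO}}^N$ means $z^\top\Pi z\geq0$ for every $z\in\mathcal{K}_{\operatorname{SO}}^N$. By the decomposition this is equivalent to $w^\top K_\sigma^\top\Pi K_\sigma w\geq 0$ for all $w\in\mathbb{R}_+^{2N}$ and all $\sigma$, that is, to $K_\sigma^\top\Pi K_\sigma\in\operatorname{COP}_{2N}$ for all $\sigma$.

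\emph{Step 1: interpreting $L_\sigma$.} I would use the auxiliary index $N$ to represent the value $0$: extend $x\in\mathbb{R}^N$ to $\tilde x\in\mathbb{R}^{N+1}$ by setting $\tilde x_N=0$. Regard $\sigma^{-1}(i)\in\{0,\dots,N\}$ as the position of index $i$ in a linear order. For $w\in\mathbb{R}_+^N$ put $f(p)=\sum_{k=1}^{p}w_k$, which is nondecreasing in $p$ with $f(0)=0$. Reading off the definition \eqref{eq:transformationMatrices} gives
\[
(L_\sigma w)_i = f(\sigma^{-1}(i))-f(\sigma^{-1}(N)).
\]
The two cases $\sigma^{-1}(N)<\sigma^{-1}(i)$ and $\sigma^{-1}(i)<\sigma^{-1}(N)$ give the $+1$ and $-1$ entries respectively.

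It follows that $L_\sigma\mathbb{R}_+^N$ is exactly the set of $x$ whose extension $\tilde x$ is nondecreasing along the order $\sigma$, meaning $\tilde x_{\sigma(0)}\le\dots\le\tilde x_{\sigma(N)}$. For the reverse inclusion, given such an $x$, take $w_k=\tilde x_{\sigma(k)}-\tilde x_{\sigma(k-1)}\geq0$. Then the telescoping sum recovers $x$, because $\tilde x_N=0$. Consequently $K_\sigma\mathbb{R}_+^{2N}$ consists of the pairs $(x,y)$ for which both $\tilde x$ and $\tilde y$ are nondecreasing along the same order $\sigma$.

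\emph{Step 2: the two inclusions.} First I would note that, with the extension $\tilde x_N=\tilde y_N=0$, the defining conditions of $\mathcal{K}_{\operatorname{SO}}^N$ are equivalent to
\[
(\tilde x_i-\tilde x_j)(\tilde y_i-\tilde y_j)\ge0 \quad\text{for all } i,j\in[N+1].
\]
The condition $x_iy_i\ge0$ is precisely the case $j=N$.

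For $K_\sigma\mathbb{R}_+^{2N}\subseteq\mathcal{K}_{\operatorname{SO}}^N$: if $\tilde x$ and $\tilde y$ are both nondecreasing along $\sigma$, then for any $i,j$ the differences $\tilde x_i-\tilde x_j$ and $\tilde y_i-\tilde y_j$ have the same weak sign, determined by which of $i,j$ comes first in $\sigma$.

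For the converse, which is the only step with real content, I would take $\sigma$ to sort the indices $[N+1]$ lexicographically by the pair $(\tilde x_i,\tilde y_i)$. Then $\tilde x$ is nondecreasing along $\sigma$ by construction. For $\tilde y$, consider consecutive indices $i$ before $j$. Either $\tilde x_i<\tilde x_j$, in which case the similar-ordering condition forces $\tilde y_i\le\tilde y_j$, or $\tilde x_i=\tilde x_j$, in which case the tie-break gives $\tilde y_i\le\tilde y_j$. So both sequences are nondecreasing along $\sigma$, and $(x,y)\in K_\sigma\mathbb{R}_+^{2N}$ by Step 1.

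I expect the main obstacle to be the index bookkeeping in Step 1. One must check that the sign and range conventions in \eqref{eq:transformationMatrices} really give the telescoping representation, and handle the boundary case $i=N$ (the identically zero coordinate). The combinatorial sorting argument in Step 2 is then routine.
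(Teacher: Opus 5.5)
Your proposal is correct and follows essentially the same route as the paper. Like the paper, you identify $L_\sigma\mathbb{R}_+^N$ with the cone of vectors (extended by $x_N=0$) that are nondecreasing along $\sigma$ by telescoping the adjacent gaps, write $\mathcal{K}_{\operatorname{SO}}^N=\bigcup_\sigma K_\sigma\mathbb{R}_+^{2N}$, and chain the equivalences to copositivity. The only difference is that you justify the existence of a common ordering permutation through the lexicographic sort on $(\tilde x_i,\tilde y_i)$, which is a step the paper simply asserts as clear.
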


\begin{proof}
    For each $\sigma \in \mathcal{S}_{[N+1]}$ let 
    \begin{align*}
        C_\sigma
        = \{x\in\mathbb R^N: x_{\sigma(0)}\leq x_{\sigma(1)}\leq\cdots\leq x_{\sigma(N)}\},
    \end{align*}
    where the value at the label $N$ is fixed as $x_N = 0$. 
    First we claim that $C_\sigma=L_\sigma\mathbb R_+^N$.
    Indeed, let $x\in C_\sigma$ and define the adjacent gaps
    \begin{align*}
        \alpha_k 
        = x_{\sigma(k)}-x_{\sigma(k-1)} 
        \geq 0\,,
    \qquad k=1,\ldots,N.
    \end{align*}
    Then for $\alpha = (\alpha_k)_{k=1}^N$ and $i \in 0,\ldots,N-1$ we have by telescoping
    \begin{align*}
        (L_\sigma \alpha)_i
        = \sum_{k=1}^N (L_\sigma)_{ik} \alpha_k
       &=
        \begin{cases}
            \sum_{k=\sigma^{-1}(N)+1}^{\sigma^{-1}(i)} (x_{\sigma(k)}-x_{\sigma(k-1)})\,, & \sigma^{-1}(N) < \sigma^{-1}(i)\,,\\
            \sum_{k=\sigma^{-1}(i)+1}^{\sigma^{-1}(N)} (x_{\sigma(k-1)}-x_{\sigma(k)})\,, & \sigma^{-1}(i) < \sigma^{-1}(N) 
        \end{cases} \\
        &= x_i\,.
    \end{align*}
    Thus $L_\sigma \alpha = x$ and hence $C_\sigma \subseteq L_\sigma\mathbb R_+^N$.
    The converse $C_\sigma \supseteq L_\sigma\mathbb R_+^N$ can be shown in a similar way and thus $C_\sigma = L_\sigma\mathbb R_+^N$.
    Next, clearly
    \begin{align}
        \label{eq:representationSO}
        \mathcal{K}_{\operatorname{SO}}^N = \bigcup_{\sigma \in \mathcal{S}_{[N+1]}} K_\sigma \mathbb{R}_+^{2N}\,,
    \end{align}
    where $K_\sigma$ is defined in \eqref{eq:transformationMatrices}.
    Indeed, $(x,y) \in \mathcal{K}_{\operatorname{SO}}^N$ means precisely that there exists a permutation $\sigma \in \mathcal{S}_{[N+1]}$ such that $(x,y) \in C_\sigma \times C_\sigma$ (i.e. $(x,y)$ can be ordered by the same permutation after extension with $x_N = y_N = 0$).
    By the previous derivation $C_\sigma \times C_\sigma = K_\sigma \mathbb{R}_+^{2N}$ and hence \eqref{eq:representationSO} follows.
    Now we have the following chain of equivalences
    \begin{align*}
        \Pi \in \boldsymbol{\Pi}_{\operatorname{SO}}^N
        &\Longleftrightarrow
        z^\top \Pi z \geq 0 \text{\ for all\ } z \in \mathcal{K}_{\operatorname{SO}} \\
        &\Longleftrightarrow
        w^\top K_\sigma^\top \Pi K_\sigma w \geq 0 \text{\ for all\ } w \in \mathbb{R}_+^{2N}\,,\; \sigma \in \mathcal{S}_{[N+1]} \\
        &\Longleftrightarrow 
        K_\sigma^\top \Pi K_\sigma \in \operatorname{COP}_{2N}\,,\;  \sigma \in \mathcal{S}_{[N+1]}\,,
    \end{align*}
    which finishes the proof.
\end{proof}

Note that while the characterization in Theorem~\ref{thm:descriptionFullBlock} is exact, checking if a matrix is copositive is co-NP-complete \cite{MurtyKabadi1987}.
Further, the test in Theorem~\ref{thm:descriptionFullBlock} entails testing all $(N+1)!$ matrices, which is computationally intractable even for moderate $N$.

\section{Stability tests on infinite horizon}
\label{sec:infiniteHorizonMultiplier}

In this section we use Theorem \ref{thm:descriptionFullBlock} to develop a general stability test for Lur'e interconnections with monotone nonlinearities.
First we recall the notion of hard IQCs.
Let $\Psi$ be any stable dynamical system defined by the realization
\begin{align}
    \label{eq:filter}
    \Psi = 
    \left[\begin{tabular}{c|c} $A_\Psi$ & $B_\Psi$ \\ \hline $C_\Psi$ & $D_\Psi$ \end{tabular} \right]
    = 
    \begin{cases*}
        \xi_{t+1}^\Psi = A_\Psi \xi_t^\Psi + B_\Psi w_t \\
        z_t = C_\Psi \xi_t^\Psi + D_\Psi w_t
    \end{cases*}\,, \quad
    \xi_0^\Psi = 0\,,
\end{align}
with inputs $w = (x, y) \in \ell^{2e} \times \ell^{2e}$, state $\xi^\Psi \in \ell_{n_\Psi}^{2e}$ and output $z \in \ell_p^{2e}$.
Further let $\Pi \in \mathbb{S}^{p \times p}$ be any symmetric matrix.
Then a relation $\Delta \subseteq \ell^{2e} \times \ell^{2e}$ is said to satisfy the hrd IQC defined by $(\Psi,\Pi)$ if 
\begin{align}
    \label{eq:hardIQC}
    \sum_{t=0}^{T-1} z_t^\top \Pi z_t \geq 0 \quad \text{\ for all\  } T \in \mathbb{N}_0\,,
\end{align}
for any $w = (x,y) \in \Delta$, where $z_t$ is the output of the filter $\Psi$ driven by $w$.
For a linear system $G$ with realization $(A,B,C,D)$ with $A \in \mathbb{R}^{n \times n}$ we define the product realization
\begin{align*}
    \Psi \mat{G \\ 1} 
    = \left[\begin{tabular}{cc|c} $A_\Psi$ & $B_\Psi \mat{C \\ 0}$ & $B_\Psi \mat{D \\ 1}$ \\ $0$ & $A$ & $B$ \\ \hline $C_\Psi$ & $D_\Psi \mat{C \\ 0}$ & $D_\Psi \mat{D \\ 1}$ \end{tabular} \right]
    =: \left[\begin{tabular}{c|c} $\mathcal{A}$ & $\mathcal{B}$ \\ \hline $\mathcal{C}$ & $\mathcal{D}$ \end{tabular} \right]
\end{align*}

The following well-known theorem \cite{seiler2014stability,scherer2018stability} shows how we can obtain stability using such IQCs.

\begin{theorem}[\cite{seiler2014stability,scherer2018stability}]
    \label{thm:stabilityHard}
    Let $\Delta$ satisfy the hard IQC defined by $(\Psi,\Pi)$.
    Let $G$ be a stable system and suppose that there exists some $\mathcal{X} \in \mathbb{S}^{(n_\Psi + n) \times (n_\Psi + n)}$ with
    \begin{align}
        \label{eq:stabilityHard}
        \mat{I & 0 \\ \mathcal{A} & \mathcal{B} \\ \mathcal{C} & \mathcal{D}}^\top \mat{-\mathcal{X} & 0 & 0 \\ 0 & \mathcal{X} & 0 \\ 0 & 0 & \Pi} \mat{I & 0 \\ \mathcal{A} & \mathcal{B} \\ \mathcal{C} & \mathcal{D}} < 0\,, \quad 
        \mathcal{X} \geq 0\,, \quad
        \tr(\mathcal{X}) = 1\,.
    \end{align}
    If the interconnection of $G$ and $\Delta$ is well-posed, then it is stable and the gain depends only on the negativity margin of the first LMI in \eqref{eq:stabilityHard}.
\end{theorem}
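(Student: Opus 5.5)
The proof follows the standard dissipation argument for hard IQCs.

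\emph{Setup.} Fix $u$ in $\ell^{2e}$, or consider the exogenous input entering the loop, and let $(x,y)$ be the signals of the well-posed interconnection. Here $x = Gy + (\text{external input})$ is the input to $\Delta$ and $y$ its output, so $(x,y)\in\Delta$. Drive the filter $\Psi$ with $w=(x,y)$ and stack its state with the plant state, $\zeta_t = (\xi^\Psi_t, \xi^G_t)$. When the disturbance is zero, this augmented state evolves as $\zeta_{t+1} = \mathcal{A}\zeta_t + \mathcal{B} y_t$ with $z_t = \mathcal{C}\zeta_t + \mathcal{D} y_t$, by construction of the product realization $\Psi\binvec{G}{1}$. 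With a disturbance, additional input columns appear; I would treat them as an extra input and absorb them by a perturbation argument. Strictness of the LMI gives a margin $\varepsilon>0$, so that
\[
\binvec{\zeta}{y}^\top\!\left(\mat{\mathcal{A}&\mathcal{B}}^\top\mathcal{X}\mat{\mathcal{A}&\mathcal{B}} - \mat{I&0}^\top\mathcal{X}\mat{I&0} + \mat{\mathcal{C}&\mathcal{D}}^\top\Pi\mat{\mathcal{C}&\mathcal{D}}\right)\binvec{\zeta}{y} \le -\varepsilon(|\zeta|^2+|y|^2).
\]

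\emph{Dissipation inequality.} Evaluate the quadratic form above along trajectories. This yields $\zeta_{t+1}^\top\mathcal{X}\zeta_{t+1} - \zeta_t^\top\mathcal{X}\zeta_t + z_t^\top\Pi z_t \le -\varepsilon |y_t|^2 + (\text{cross terms in the disturbance } d_t)$. The cross terms are bounded by Young's inequality, $\le \tfrac{\varepsilon}{2}(|\zeta_t|^2+|y_t|^2) + c|d_t|^2$. Summing from $t=0$ to $T-1$ with $\zeta_0=0$, using $\mathcal{X}\ge0$ to drop the terminal term, and applying the hard IQC $\sum_{t<T} z_t^\top\Pi z_t\ge0$ gives
\[
\tfrac{\varepsilon}{2}\sum_{t<T}|y_t|^2 \le c\sum_{t<T}|d_t|^2 \quad\text{for all } T.
\]
Hence the truncations of $y$ are uniformly bounded, with a gain $\sqrt{2c/\varepsilon}$. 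Stability of $G$ then bounds $x$ as well. The normalization $\tr(\mathcal{X})=1$ makes $\|\mathcal{X}\|\le 1$, so $c$ depends only on the system matrices and on $\varepsilon$. This is the sense in which the gain depends only on the negativity margin.

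\emph{Main obstacle.} The hard step is handling the exogenous disturbance injection cleanly. It must enter the augmented realization, and the strict LMI must be shown to dominate the extra terms uniformly. My first approach is to write the disturbed state equations explicitly as $\zeta_{t+1} = \mathcal{A}\zeta_t + \mathcal{B}y_t + \mathcal{E}d_t$, and then complete the square using $\|\mathcal{X}\|\le1$ together with the $\varepsilon$-margin on both $\zeta$ and $y$.
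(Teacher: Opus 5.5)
Your argument is correct and is the standard dissipation-plus-hard-IQC proof from the cited works of Seiler and Scherer--Veenman; the paper itself only quotes the result. Absorbing the disturbance terms by Young's inequality is the usual perturbation step that turns the strict nominal LMI into a finite-gain bound, and summing over $[0,T)$ with $\zeta_0=0$, $\mathcal{X}\geq 0$ and the hard IQC gives the uniform truncated estimate without any a priori $\ell^2$ assumption. Two small corrections: the intermediate dissipation line must keep the full $-\varepsilon(|\zeta_t|^2+|y_t|^2)$, since the cross terms involve $\zeta_t$ (your Young step already uses this), and the constant $c$ also depends on $\Pi$ and on the disturbance input and feedthrough matrices, not only on $\varepsilon$ and $\|\mathcal{X}\|\leq 1$.
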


To see how we can use Theorem \ref{thm:stabilityHard} for our class of monotone nonlinearities $\boldsymbol{\Delta}_1(0,\infty)$, we recall the following, trivial lemma.

\begin{lemma}
    \label{lem:hardIQC}
    Let $\Psi = \Psi_N$ be defined by \eqref{eq:filter} with
    \begin{align*}
        A_\Psi = \mat{S & 0 \\ 0 & S}\,,\quad B_\Psi = \mat{e_1^{N-1} & 0 \\ 0 & e_1^{N-1}}\,,\; \\ 
        C_\Psi = \mat{0 & 0 \\ I_{N-1} & 0 \\ 0 & 0 \\ 0 & I_{N-1}}\,,\quad D_\Psi = \mat{e_1^N & 0 \\ 0 & e_1^N}\,,
    \end{align*}
    with $S \in \mathbb{R}^{(N-1) \times (N-1)}$ being the lower Jordan block with zero diagonal and $e_1^k \in \mathbb{R}^k$ first unit vector. 
    i.e. 
    \begin{align*}
        \xi_t^\Psi &= (x_{t-1},\ldots,x_{t-N+1},y_{t-1},\ldots,y_{t-N+1}) \in \mathbb{R}^{2(N-1)}\,, \\   
        z_t &= (x_t,\ldots,x_{t-N+1},y_t,\ldots,y_{t-N+1})  \in \mathbb{R}^{2N}\,.
    \end{align*}
    Then every $\Delta \in \boldsymbol{\Delta}_1(0,\infty)$ satisfies the hard IQC defined by $(\Psi,\Pi)$ for any $\Pi \in \boldsymbol{\Pi}_{\operatorname{SO}}^N$.
\end{lemma}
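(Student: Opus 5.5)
The claim is that for each fixed time $t$ the stacked window $z_t$ lies in $\mathcal{K}_{\operatorname{SO}}^N$. Given this, every summand in \eqref{eq:hardIQC} is nonnegative for every $\Pi \in \boldsymbol{\Pi}_{\operatorname{SO}}^N$, so every partial sum is nonnegative as well. This pointwise-in-time positivity is stronger than what \eqref{eq:hardIQC} asks for.

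First I would check that the realization $(A_\Psi,B_\Psi,C_\Psi,D_\Psi)$ with $\xi_0^\Psi=0$ does produce the stated window. The matrix $S$ is the lower shift and $B_\Psi$ injects the current sample into the first state slot. Induction on $t$ then gives $\xi_t^\Psi = (x_{t-1},\ldots,x_{t-N+1},y_{t-1},\ldots,y_{t-N+1})$, where samples with negative index are taken to be $0$ because the initial state is zero. Reading off $z_t = C_\Psi\xi_t^\Psi + D_\Psi w_t$ gives the stated $z_t$, again with $x_s=y_s=0$ for $s<0$.

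Next, fix $(x,y)\in\Delta_\phi$ with $\Delta_\phi\in\boldsymbol{\Delta}_1(0,\infty)$, so that $y_s\in\phi(x_s)$ for all $s\geq 0$. Set $\hat x_i = x_{t-i}$ and $\hat y_i = y_{t-i}$ for $i\in[N]$, using zero for negative time indices. I would verify the two defining inequalities of $\mathcal{K}_{\operatorname{SO}}^N$ directly:
\begin{itemize}
\item For two genuine samples, monotonicity of $\phi$ (the slope bound \eqref{eq:slopeRestricted} with $\kappa=\infty$) gives $(\hat x_i-\hat x_j)(\hat y_i-\hat y_j)\geq 0$.
\item For a genuine sample paired with the padded value $(0,0)$, we use $0\in\phi(0)$: monotonicity applied to the pairs $(x_s,y_s)$ and $(0,0)$ gives $x_sy_s\geq 0$, which is exactly the required inequality. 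The same argument gives the unbiasedness condition $\hat x_i\hat y_i\geq 0$.
\item For two padded zeros, both products are trivially $0$.
\end{itemize}
Hence $z_t\in\mathcal{K}_{\operatorname{SO}}^N$ and $z_t^\top\Pi z_t\geq 0$ by the definition of $\boldsymbol{\Pi}_{\operatorname{SO}}^N$. Summing over $t=0,\ldots,T-1$ proves \eqref{eq:hardIQC}; for $T=0$ the sum is empty.

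No step here is a real obstacle. The only point needing care is the zero-padding for $t<N-1$. It is handled by viewing $(0,0)$ as an admissible pair of the relation, which is exactly what the assumption $0\in\phi(0)$ provides. This also explains why the unbiasedness condition appears in $\mathcal{K}_{\operatorname{SO}}^N$.
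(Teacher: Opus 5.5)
Your proof is correct. The paper calls this lemma trivial and gives no proof. Your argument is exactly the intended one: each window $z_t$ lies in $\mathcal{K}_{\operatorname{SO}}^N$, since monotonicity of $\phi$ handles pairs of genuine samples and $0\in\phi(0)$ handles the zero-padded entries and unbiasedness, so every summand in \eqref{eq:hardIQC} is nonnegative.
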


Combining Theorem \ref{thm:descriptionFullBlock}, Theorem \ref{thm:stabilityHard} and Lemma \ref{lem:hardIQC} we obtain
\begin{corollary}
    \label{cor:stabilityTestConcrete}
    Let $G$ be stable system and suppose that the interconnection between $G$ and every $\Delta \in \boldsymbol{\Delta}_1(0,\infty)$ is well-posed.
    Let $N \in \mathbb{N}$ and suppose that there exists some $\Pi \in \mathbb{S}^{2N \times 2N}$ and $\mathcal{X} \in \mathbb{S}^{(2(N-1) + n) \times (2(N-1) + n)}$ such that \eqref{eq:stabilityHard} and 
    \begin{align}
        \label{eq:hardSOConstraintExplicit}
        K_\sigma^\top \Pi K_\sigma \in \operatorname{COP}_{2N} \text{\ for all\ } \sigma \in \mathcal{S}_{[N+1]}\,.
    \end{align}
    Then the interconnection between $G$ and $\boldsymbol{\Delta}_1(0,\infty)$ is robustly stable.
\end{corollary}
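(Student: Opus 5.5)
The corollary follows by chaining the three cited results, so the plan is to verify their hypotheses in order.

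First, by Theorem~\ref{thm:descriptionFullBlock}, condition \eqref{eq:hardSOConstraintExplicit} (copositivity of $K_\sigma^\top \Pi K_\sigma$ for every $\sigma \in \mathcal{S}_{[N+1]}$) is equivalent to $\Pi \in \boldsymbol{\Pi}_{\operatorname{SO}}^N$. Lemma~\ref{lem:hardIQC} then shows that every $\Delta \in \boldsymbol{\Delta}_1(0,\infty)$ satisfies the hard IQC defined by $(\Psi_N,\Pi)$.

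For completeness I would also indicate why Lemma~\ref{lem:hardIQC} holds. For $(x,y)\in\Delta_\phi$ with $\phi$ monotone and $0\in\phi(0)$, each window $z_t=(x_t,\ldots,x_{t-N+1},y_t,\ldots,y_{t-N+1})$ lies in $\mathcal{K}_{\operatorname{SO}}^N$. Indeed, $(x_i-x_j)(y_i-y_j)\ge 0$ follows from monotonicity, and $x_iy_i\ge0$ follows from monotonicity together with $0\in\phi(0)$. Entries with negative time index are zero, and a zero pair is consistent with both conditions. Hence $z_t^\top\Pi z_t\ge0$ for every $t$, and summing over $t=0,\ldots,T-1$ gives \eqref{eq:hardIQC}.

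Next, $G$ is stable and its realization has state dimension $n$. The filter state has dimension $2(N-1)$, so the product realization $\Psi\mat{G\\1}$ has $\mathcal{X}$ of the size stated in the corollary. By assumption, $\mathcal{X}$ satisfies \eqref{eq:stabilityHard} and the interconnection is well-posed for every $\Delta \in \boldsymbol{\Delta}_1(0,\infty)$. Theorem~\ref{thm:stabilityHard} therefore gives stability of the interconnection with each such $\Delta$.

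For robust stability, the bound must be uniform over the class. This holds because Theorem~\ref{thm:stabilityHard} states that the gain depends only on the negativity margin of the first LMI in \eqref{eq:stabilityHard}. That margin depends only on $G$, $\Psi$, $\Pi$ and $\mathcal{X}$, not on $\Delta$, so the gain bound holds for the whole class $\boldsymbol{\Delta}_1(0,\infty)$.

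The only step needing real care is the $\Delta$-independence of the gain bound, since that is what upgrades stability of each individual interconnection to robust stability in the sense of \cite{kharitenko2024exactness}. Here I would rely directly on the quoted form of Theorem~\ref{thm:stabilityHard}. Everything else is routine bookkeeping.
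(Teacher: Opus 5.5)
Your proposal is correct and takes the same route as the paper, which obtains the corollary simply by combining Theorem~\ref{thm:descriptionFullBlock}, Lemma~\ref{lem:hardIQC} and Theorem~\ref{thm:stabilityHard}. Your sketch of the lemma (each window $z_t$ lies in $\mathcal{K}_{\operatorname{SO}}^N$, with $0\in\phi(0)$ covering the zero-padded entries) and your remark that the gain bound is independent of $\Delta$ make explicit what the paper leaves implicit.
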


In practice, since checking copositivity is intractable, we replace \eqref{eq:hardSOConstraintExplicit} by the well-known inner approximation of copositivity \cite{shaked2021copositive}, i.e. 
\begin{align}
    \label{eq:hardSOConstraintRelaxed}
    K_\sigma^\top \Pi K_\sigma \in \mathbb{S}_+^{2N \times 2N} + \mathbb{R}_+^{2N \times 2N} \text{\ for all\ } \sigma \in \mathcal{S}_{[N+1]}\,.
\end{align} 

Now we are ready to present our counterexample.

\begin{example}
    \label{ex:mainExample}
    Consider
    \begin{align}
        \label{eq:exampleSystem}
        G_1(z)=\frac{1.1z+0.6}{z^2+1.8z+0.9}
    \end{align}
    and a nonlinearity from $\boldsymbol{\Delta}_1(0,\kappa)$ for $\kappa > 0$. 
    In \cite{kharitenko2024exactness} it was shown that a Zames-Falb multiplier certifying stability via \eqref{eq:zamesFalbStabilityTest} only exists up to $\kappa \approx 1.8408$, while instability can be shown using the criterion from \cite{seiler2021construction} for $\kappa \geq 2.1471$.
    Using the criterion in Corollary~\ref{cor:stabilityTestConcrete} on the loop transformed plant $G = G_1 - \frac{1}{\kappa}$ and $\boldsymbol{\Delta}_1(0,\infty)$ we can obtain the following upper bounds on $\kappa$ via bisection
    \begin{center}
        \begin{tabular}{c|ccccc}
            $N$ & $2$ & $3$ & $4$ & $5$ & $6$ \\ \hline
            $\boldsymbol{\Pi}_{\operatorname{OZF}}^N$ & $0.2845$ & $0.8035$ & $1.7046$ & $1.7336$ & $1.8315$\\
            $\boldsymbol{\Pi}_{\operatorname{SO}}^N$ & $0.3317$ & $1.2304$ & $2.0784$ & $2.1466$ & $2.1466$
        \end{tabular}
    \end{center}
    where we use condition \eqref{eq:hardSOConstraintRelaxed} instead of \eqref{eq:hardSOConstraintExplicit} for the full test $\boldsymbol{\Pi}_{\operatorname{SO}}^N$.
    This (numerically) disproves a conjecture due to J. Carrasco \cite{su2023necessity}.
    To remove doubt about solvers operating in floating point arithmetic, in Appendix \ref{sec:counterexampleRational} we provide rational certificates $\mathcal{X} $ and $\Pi$ for $N = 4$ and $\kappa = 2$. 
\end{example}

\begin{remark}
    A potentially more powerful test would be given by constructing IQCs with terminal cost as described by Scherer and Veenman \cite{scherer2018stability,scherer2018iqc}, i.e. replacing the condition \eqref{eq:hardIQC} by
    \begin{align}
        \label{eq:IQCTerminalCost}
        \sum_{t=0}^{T-1} z_t^\top \Pi z_t + (\xi_T^\Psi)^\top Z \xi_T^\Psi \geq 0 \text{\ for all\ } T \geq 0\,,
    \end{align}
    with the additional variable $Z \in \mathbb{S}^{2(N-1) \times 2(N-1)}$ and replacing $\mathcal{X} \geq 0$ by the coupling constraint $\mathcal{X} - \operatorname{diag}(Z, 0) \geq 0$ in \eqref{eq:stabilityHard}.
    While such a parametrized IQC was given for the OZF multipliers in \cite{scherer2022dissipativity}, the argument there to relies on their particular row- and column structure and cannot be easily extended to general multipliers in $\boldsymbol{\Pi}_{\operatorname{SO}}^N$.
\end{remark}

\section{Conclusion}

In this paper we have presented a more general condition for stability of Lur'e systems with monotone nonlinearities in discrete time that is potentially less conservative than the well-known OZF multiplier stability test, thereby disproving a conjecture due to J. Carrasco.
Further work could include strengthening the conditions \eqref{eq:hardSOConstraintExplicit} to obtain more tractable stability tests.\\
\\
Acknowledgements: The author would like to thank Dennis Gramlich and Joaquin Carrasco for some helpful discussions.
\printbibliography

\newpage 
\appendix

\section{Rational certificates of Example \ref{ex:mainExample}}
\label{sec:counterexampleRational}

The following matrices satisfy \eqref{eq:stabilityHard} and \eqref{eq:hardSOConstraintRelaxed} for $N = 4$ and $\kappa = 2$ exactly:
\begin{align*}
    \Pi = \mat{\Pi_{11} & \Pi_{12} \\ \Pi_{12}^\top & \Pi_{22}}\,, \quad
    \mathcal{X} =  \mat{\mathcal{X}_{11} & \mathcal{X}_{12} \\ \mathcal{X}_{12}^\top & \mathcal{X}_{22}}\,,
\end{align*}
with
\begin{align*}
    \Pi_{11} &= 10^{-5}\mat{6649 & 7118 & -5403 & -9585 \\
7118 & 13071 & 4161 & -5012 \\
-5403 & 4161 & 26004 & 23038  \\
-9585 & -5012 & 23038 & 28676 }\,, \\
\Pi_{12} 
&= 10^{-5}\mat{11671 & -3863 & -3842 & -2906 \\ 
3293 & 13901 & -10052 & 2618 \\
-5398 & 8935 & 13416 & -14595 \\
4069 & -6155 & 12838 & -9745} \\
\Pi_{22}
&= 
10^{-5}\mat{2118 & -2819 & 3203 & -2926 \\
-2819 & 4048 & -4025 & 3177 \\
3203 & -4025 & 7693 & -8961 \\
-2926 & 3177 & -8961 & 11741}\,,\\
\mathcal{X}_{11}
&= 10^{-5}\mat{121924522 & 14295215 & 154603645 & -56572565 \\
14295215 & 77659559 & 7432961 & 84234444 \\
154603645 & 7432961 & 209670599 & -62831802 \\
-56572565 & 84234444 & -62831802 & 173928304 }\,, \\
\mathcal{X}_{12} 
&= 10^{-5}\mat{-24096819 & -44579430 & 6765287 & 3999883 \\
 -35944588 & -10548851 & -18015188 & 5972456 \\ 
-15108087 & -49736652 & 8781057 & 2512320 \\
-8952709 & 25155904 & -29967159 & 1478261} \\
\mathcal{X}_{22}
&= 10^{-5}\mat{28970192 & 16533564 & 4713848 & -4820492 \\
 16533564 & 19691549 & -2370424 & -2745880 \\
 4713848 & -2370424 & 5534872 & -790525 \\
 -4820492 & -2745880 & -790525 & 811419}
\end{align*}

\end{document}